\documentclass[12pt,a4paper, oneside]{amsart}

\usepackage{amsmath, nicefrac, amsthm, verbatim, amsfonts, amssymb, xcolor, bbm}
\usepackage{graphics, xspace, enumerate}
\usepackage{stix}
\usepackage[a4paper,margin=2.5cm]{geometry}%this is for easy margins

\usepackage[bb=boondox]{mathalfa}

\usepackage{graphicx}
\usepackage[colorlinks=true,citecolor=red,urlcolor=blue,linkcolor=red,bookmarksopen=true,unicode=true,pdffitwindow=true]{hyperref}
\usepackage[english]{babel}
\usepackage[languagenames,fixlanguage]{babelbib}
\hypersetup{pdfauthor={}}
\hypersetup{pdftitle={FLT for the capacity of the range of random walks}}

\hyphenation{Austau-schdienst}

\theoremstyle{plain}
\newtheorem{theorem}{Theorem}[section]

\newtheorem{lemma}[theorem]{Lemma}

\theoremstyle{definition}

\newcommand {\R} {\ensuremath{\mathbb{R}}}
\newcommand {\ZZ} {\ensuremath{\mathbb{Z}}}

\newcommand{\Capa}{\operatorname{Cap}}
\newcommand{\Var}{\operatorname{Var}}

\newcommand{\bbN}{\mathbb{N}}
\newcommand{\bbZ}{\mathbb{Z}}
\newcommand{\bbR}{\mathbb{R}}

\newcommand{\bbP}{\mathbb{P}}
\newcommand{\bbE}{\mathbb{E}}

\newcommand{\calC}{\mathcal{C}}

\newcommand{\calE}{\mathcal{E}}
\newcommand{\calF}{\mathcal{F}}

\newcommand{\calD}{\mathcal{D}}

\newcommand{\calR}{\mathcal{R}}
\newcommand{\calN}{\mathcal{N}}

\newcommand{\aps}[1]{\vert #1 \vert}

\newcommand{\floor}[1]{\lfloor #1 \rfloor}

\numberwithin{equation}{section}

%%%%%%%%%%%%%%%%%%%%%%%%

\title[FCLT for the range of stable random walks]{Functional CLT for the range of stable random walks}

\author[W.\ Cygan]{Wojciech Cygan}
\address[Wojciech Cygan]{Institut f\"{u}r Mathematische Stochastik\\Technische Universit\"{a}t Dresden\\Dresden\\Germany
\& 
Instytut Matematyczny\\Uniwersytet Wroc\l{}awski\\ Wroc\l{}aw\\ Poland}
\email{wojciech.cygan@uwr.edu.pl}

\author[N.\ Sandri\'{c}]{Nikola Sandri\'{c}}
\address[Nikola\ Sandri\'{c}]{Department of Mathematics\\University of Zagreb\\ Zagreb\\Croatia}
\email{nsandric@math.hr}

\author[S.\ \v{S}ebek]{Stjepan\ \v{S}ebek}
\address[Stjepan\ \v{S}ebek]{
	Institute of Discrete Mathematics\\
	Graz University of Technology\\
	Graz\\ 
	Austria
	\&	
		Department of Applied Mathematics\\
	Faculty of Electrical Engineering and Computing\\
	University of Zagreb\\ 
 Zagreb\\ 
	Croatia}
\email{stjepan.sebek@fer.hr}

\subjclass[2010]{60F17, 60F05, 60G50, 60G52}
\keywords{the range of a random walk, capacity, functional central limit theorem}

\begin{document}
\allowdisplaybreaks[4]

\begin{abstract}
	In this note, we establish a functional central limit theorem for the capacity of the range for a class of  $\alpha$-stable random walks on the integer lattice $\mathbb{Z}^d$ with
	  $ d> 5\alpha/2$. Using similar methods, 
we also prove  an analogous result for the cardinality of the range when  $d > 3\alpha/ 2$.
\end{abstract}

\maketitle

\section{Introduction}

Let $(\Omega,\calF,\bbP)$ be a probability space, and let $\{\xi_i\}_{i\ge1}$ be a sequence of i.i.d.\ $\bbZ^d$-valued random variables defined on $(\Omega,\calF,\bbP)$, where $\bbZ^d$ denotes the $d$-dimensional integer lattice. Further, let $S_0=x$, and $S_n=S_{n-1}+\xi_n$, $n\ge1$, be a  $\bbZ^d$-valued random walk starting from $x\in\ZZ^d$. 
The range of the random walk $\{S_n\}_{n\ge0}$ is defined as the random set  
\begin{align*}
	\calR_n\, =\, \{S_0, \dots,S_n\}\,,\qquad n\geq 0\,.
	\end{align*} 
 Throughout the paper, we use the notation $|\calR_n|$ to denote 
the cardinality of $\calR_n$.

In addition to the cardinality of the range, we also consider its capacity. 
Let $\bbP_x$ be the probability measure (on the space $(\Omega,\mathcal{F})$) which corresponds to $\{S_n\}_{n\ge0}$ starting at $x\in\ZZ^d$. We write $\bbP$ instead of $\bbP_0$. 
For any $A\subseteq\ZZ^d$, we denote by $T^+_A$ the first hitting time of the set $A$ by $\{S_n\}_{n\ge0}$, that is,
$$
T^+_A\,=\,\inf\{n\ge1:S_n\in A\}\,.
$$
Also, when $A=\{x\}$ for $x\in\ZZ^d$, we write $T^+_x$ instead of $T^+_{\{x\}}$. Recall  $\{S_n\}_{n\ge0}$ is said to be transient if $\bbP (T^+_0=\infty)>0$; otherwise it is said to be recurrent, which implies that every random walk is either transient
or recurrent.
In the case when $\{S_n\}_{n\ge0}$  is transient,  the capacity of a set $A\subseteq\ZZ^d$ 
is defined as 
$$
{\rm Cap}\,(A)\,=\,\sum_{x\in A}\bbP_x(T^+_A=\infty)\,.
$$
For $n\ge0$, we denote $\calC_n={\rm Cap}\, (\calR_n)$. Observe that $\calC_n$ is a random variable.
The aim of this article is to prove a functional central limit theorem (FCLT) for the capacity and the cardinality of the range of the random walk  $\{S_n\}_{n\ge0}$, that is, for the  stochastic processes  $\{\calC_{\lfloor nt\rfloor}\}_{t\ge0}$ and $\{|\calR_{\lfloor nt\rfloor}|\}_{t\ge0}$.

The study on the range of random walks  in $\ZZ^d$ has a long history. A pioneering work is due to Dvoretzky and Erd\"{o}s \cite{Dvoretzky} where they obtained the strong law of large numbers for $\{|\calR_n|\}_{n\ge0}$ of a simple random walk in  $d\ge2$. 
This result was later extended by Spitzer  \cite{Spitzer} to all random walks  in $d\ge1$.  A central limit theorem for $\{|\calR_n|\}_{n\ge0}$ was first  obtained by Jain and Orey \cite{Jain-Orey} for strongly transient random walks (see below for the definition of strong transience). Jain and Pruitt \cite{Jain_Pruitt} later 
extended this result to all random walks in $d\ge3$. 
Le Gall \cite{LeGall-French} proved a version of a central limit theorem for $\{|\calR_n|\}_{n\ge0}$ of all two-dimensional random walks with zero mean and finite second moment. It is remarkable that in this case the limit law is not normal. 
For $d = 1$, Jain and Pruitt \cite[Theorem 6.1]{Jain_Pruitt_Berkeley} proved that $\bbE[|\calR_n|] \asymp \sqrt{n}$, where the symbol $\asymp$ means that the ratio of the two expressions is bounded from below and above by some positive constants. 
Le Gall and Rosen \cite{LeGall-Rosen} established the strong law of large numbers and  central limit theorem for $\{|\calR_n|\}_{n\ge0}$ of a class of $\alpha$-stable random walks. 

Studies on the long-time behavior  of  $\{\calC_n\}_{n\ge0}$ were initiated by  Jain and Orey in \cite{Jain-Orey} where they obtained a version of the strong law of large numbers for any transient random walk.  Asselah, Schapira and Sousi \cite{Asselah_Zd} proved  a central limit theorem  for $\{\calC_n\}_{n\ge0}$ of a simple random walk in $d\ge6$. 
Versions of the law of large numbers and central limit theorem in $d = 4$ were proved by the same authors in \cite{Asselah_Z4}, see also \cite{Chang}. 
Recently, Schapira \cite{Schapira} proved a central limit theorem for $\{\calC_n\}_{n\ge0}$ of a class of symmetric random walks in $\bbZ^5$ which satisfy appropriate moment conditions. 
In \cite{CSS19}, the present authors established a central limit theorem for $\{\calC_n\}_{n\ge0}$ of a class of $\alpha$-stable random walks in  $d > 5\alpha/2$.

A FCLT for $\{|\calR_n|\}_{n\ge0}$ was proved by Jain and Pruitt \cite{Jain_Pruitt_further} for all random walks in $d\ge3$  satisfying $\mathbb{P}(T^+_0=\infty)<1$  (note that if $\mathbb{P}(T^+_0=\infty)=1$ then $|\calR_n| =n+1$ a.s.). This result is a version of Donsker's invariance principle and it states that suitably normalized and linearly interpolated process $\{|\calR_n|\}_{n\ge0}$ converges weakly in the space of continuous functions endowed with the locally uniform topology to a standard one-dimensional Brownian motion.  
The purpose of the present article is to prove an analogous result for $\{\calC_{\lfloor nt\rfloor}\}_{t\ge0}$ and $\{|\calR_{\lfloor nt\rfloor}|\}_{t\ge0}$ of  a class of  $\alpha$-stable random walks. We remark that there is a correspondence between the limit behavior of the cardinality of the range when the ratio of the dimension to the index of stability is $\rho$ and the capacity of the range when this ratio is $\rho + 1$ (as already indicated in \cite{CSS19}).
Basing upon the results for the range when $d/\alpha \leq 3/2$ from \cite{LeGall-Rosen}, we expect that for $d/\alpha \leq 5/2$ the capacity of the range behaves in a different manner than presented in this article. 
If $d/\alpha = 5/2$, we conjecture that the limit law is again Gaussian but the scaling sequence should be of the form $\sqrt{ng(n)}$, where  $g(n)$ is a slowly varying function. This corresponds to the scaling sequence for the range process in the case $d/\alpha = 3/2$, as established in \cite[Section 4.5]{LeGall-Rosen}. We remark that the case $\alpha =2$ (and $d=5$) for the capacity process has been recently partially solved by Schapira  \cite{Schapira} where he studied a class of symmetric random walks which satisfy some moment condition and for this class he obtained the normal law in the limit while the scaling sequence was  $\sqrt{n\log n}$. 
For $2\leq d/\alpha<5/2$, we conjecture that the limit law is nonnormal and it is given in terms  of the  self-intersection local time of the limiting stable process (see  \cite[Theorem 1.2]{Asselah_Z4} for the case $\alpha=2$) and with the scaling sequence of the form $n^{3 - d/\alpha}g(n)$, where $g(n)$ is again a slowly varying function. %which involves the truncated Green function.  
In the context of the range process, we expect the same asymptotic behavior but  the corresponding scaling sequence should be of the form $n^{2 - d/\alpha}g(n)$, see \cite[Result 2]{LeGall-Rosen}.

We remark that there are further interesting results pertaining to the limit behavior of the process $\{|\calR_n|\}_{n\ge0}$, including the law of the iterated logarithm and an almost sure invariance principle, see \cite{Bass_Kumagai}, \cite{Bass_Rosen}, \cite{Hamana}, \cite{Jain_Pruitt_1970}, \cite{Jain_Pruitt_1970_2}, \cite{Jain_Pruitt_LIL}, and \cite{Jain_Pruitt_further}.

Before we state the main result of this article, we formulate and briefly discuss assumptions which we impose on the random walk  $\{S_n\}_{n\ge0}$.

\begin{itemize}
	\item[(\textbf{A1})] $\{S_n\}_{n\ge0}$ is   aperiodic, that is, the set $\{x\in\mathbb{Z}^d:\, \bbP(S_1=x)>0\}$ generates (as an additive subgroup) the whole of $\ZZ^d$.
	\item[(\textbf{A2})] $\{S_n\}_{n\ge0}$ belongs to the domain of  attraction of a  nondegenerate $\alpha$-stable law with index $0<\alpha\le2$. This means there exists a regularly varying function $b(x)$ with index $1/\alpha$ such that 
	$$\frac{S_n}{b(n)}\,\xrightarrow[n\nearrow\infty]{(\text{d})}\,X_\alpha\,,$$ 
where $X_\alpha$ is an  $\alpha$-stable random variable in $\R^d$ and $\xrightarrow[]{(\text{d})}$ stands for the convergence in distribution.
\item[(\textbf{A3})] $\{S_n\}_{n\ge0}$ is  symmetric and  strongly transient.
\item[(\textbf{A4})] $\{S_n\}_{n\ge0}$ admits one-step loops, that is, $\bbP (S_1=0)>0$.
\end{itemize}

We first remark that assumption (\textbf{A1}) is not restrictive. If $\{S_n\}_{n\ge0}$ were not aperiodic, we could perform our analysis (and obtain the same results) on the (smallest) additive subgroup generated by the set $\{x\in\mathbb{Z}^d:\, \bbP(S_1=x)>0\}$.

Assumption (\textbf{A2}) is of fundamental importance for our analysis. It allows us to apply error estimates in the capacity decomposition which we use in the proof of a FCLT for $\{\calC_{\lfloor nt\rfloor}\}_{t\ge0}$. Similarly, in view of (\textbf{A2}) we can apply results from \cite{LeGall-Rosen} to estimate the number of intersection points of two independent copies of our random walk which are necessary to prove a FCLT for  $\{|\calR_{\lfloor nt\rfloor}|\}_{t\ge0}$. 

To discuss assumption (\textbf{A3}), recall that $\{S_n\}_{n\ge0}$ is transient if $\bbP (T^+_0=\infty)>0$. 
Transience is equivalent to the convergence of the series $\sum_{n\geq 1}\bbP (S_n=0)$. 
The random walk $\{S_n\}_{n\ge0}$ is called strongly transient if
the series $\sum_{n\geq 1} \sum_{k\geq n} \bbP (S_k=0)$ converges.
We remark that in $d \ge 3$ every random walk is transient, and in $d \ge 5$ it is strongly transient. However, such random walks can also appear in lower dimensions.
In particular, under assumption (\textbf{A2}), $\{S_n\}_{n\ge0}$ is transient if $d>\alpha$ and strongly transient if $d>2\alpha$, see \cite[Theorem 3.4]{Sato} and \cite[Theorem 7]{Takeuchi}).  
The notion of strong transience was first introduced  in \cite{Port} for Markov chains and was later used in \cite{Jain-Orey} in the context of the limit behavior of the range of random walks.

Assumption  (\textbf{A4}) is purely technical. Accompanied by assumptions  (\textbf{A1})--(\textbf{A3}),  
it has recently enabled the present authors in \cite{CSS19} to conclude that the appropriately centered and  normalized stochastic process $\{\calC_n\}_{n \ge 0}$ converges weakly to a normal law. Notice that (\textbf{A4})  excludes a simple random walk. We show, however, that a FCLT for $\{\calC_{\lfloor nt\rfloor}\}_{t\ge0}$ and $\{|\calR_{\lfloor nt\rfloor}|\}_{t\ge0}$ of  this process holds true as well.

We now state the main results of the article.

\begin{theorem}\label{tm:FCLR_for_the_cap}
	Assume (\textbf{A1})-(\textbf{A4}), $0<\alpha\le2$, and $d/\alpha > 5/2$. 
Then, there is a constant $\sigma_d >0 $ such that 
\begin{equation*}
\left\{\frac{\calC_{\floor{nt}} - \bbE\bigl[\calC_{\floor{nt}}\bigr]}{\sigma_d \sqrt{n}}\right\}_{t\ge0}\, 
\xrightarrow[n\nearrow\infty]{(\text{J}_1)}\,
\{B_t\}_{t\ge0}\,,
\end{equation*}
where $\lfloor x\rfloor$ denotes the integer part of $x\in\R$, $\ \xrightarrow[n\nearrow\infty]{(\text{J}_1)}$ stands for the weak convergence in 
the Skorohod space $\calD([0, \infty), \bbR)$ endowed with the $\text{J}_1$ topology, and $\{B_t\}_{t\geq 0}$ denotes a standard one-dimensional Brownian motion.
\end{theorem}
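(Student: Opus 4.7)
The plan is to establish the FCLT via the standard two-step procedure: verification of convergence of the finite-dimensional distributions, followed by tightness in the Skorohod space $\calD([0,\infty), \bbR)$ equipped with the $J_1$ topology.

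For finite-dimensional convergence, the central tool is a capacity decomposition of the type already employed by the present authors in \cite{CSS19}. For $0 \leq m \leq n$, set $\widetilde{\calR}_{m,n} = \{S_m, S_{m+1}, \ldots, S_n\}$, so that $\calR_n = \calR_m \cup \widetilde{\calR}_{m,n}$, and write
\begin{equation*}
\Capa(\calR_n) = \Capa(\calR_m) + \Capa(\widetilde{\calR}_{m,n}) - \chi_{m,n},
\end{equation*}
where $\chi_{m,n} \geq 0$ is an explicit overlap term expressible through hitting probabilities of the two pieces of the path. By the Markov property, $\Capa(\widetilde{\calR}_{m,n})$ has the same law as $\calC_{n-m}$ and is independent of $(S_0, \ldots, S_m)$. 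Iterating over a grid $0 = k_0 < k_1 < \cdots < k_\ell = n$ represents $\calC_n - \bbE[\calC_n]$ as a sum of $\ell$ independent centered copies of $\calC_{k_j-k_{j-1}} - \bbE[\calC_{k_j - k_{j-1}}]$ plus a family of centered cross-correction terms. The heart of the argument is that these cross terms have variance of order $o(n)$, so that only the independent blocks contribute in the limit. The required estimates build on those in \cite{CSS19}, controlling sums of products of Green's function values via the stable scaling in (\textbf{A2}); these sums are summable precisely because $d/\alpha > 5/2$ together with the strong transience in (\textbf{A3}). Combined with the one-dimensional CLT from \cite{CSS19}, this yields, for any $0 \leq s < t$, that
\begin{equation*}
\frac{\calC_{\lfloor nt\rfloor} - \calC_{\lfloor ns\rfloor} - \bbE\bigl[\calC_{\lfloor nt\rfloor} - \calC_{\lfloor ns\rfloor}\bigr]}{\sigma_d \sqrt{n}}
\end{equation*}
converges in distribution to a centered Gaussian with variance $t-s$, asymptotically independent of $(\calC_{\lfloor ns\rfloor} - \bbE[\calC_{\lfloor ns\rfloor}])/(\sigma_d \sqrt{n})$. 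Iterating the argument along a finite partition $0 \leq t_1 < \cdots < t_k$ gives the desired joint convergence of the normalized finite-dimensional distributions to those of $\{B_t\}_{t\geq 0}$.

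For tightness I would invoke the classical fourth-moment criterion: it suffices to establish
\begin{equation*}
\bbE\bigl[\bigl(\calC_n - \calC_m - \bbE[\calC_n - \calC_m]\bigr)^4\bigr] \leq C (n-m)^2
\end{equation*}
for all $0 \leq m \leq n$ with a constant $C$ independent of $m$ and $n$. Via the same decomposition this reduces to a fourth-moment bound on the centered $\calC_{n-m}$ plus a fourth-moment bound on the cross term $\chi_{m,n}$; both follow by expanding the relevant products of capacities and hitting probabilities and then summing the resulting Green's function expressions using the asymptotics implied by (\textbf{A2}) together with (\textbf{A3}).

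The main obstacle I anticipate is the fourth-moment control of the overlap terms $\chi_{m,n}$. Their fourth moments decompose into sums over quadruples of potential intersection points and involve products of up to eight Green's function evaluations along intertwined random-walk paths; the precise summability of these expressions is what dictates the threshold $d/\alpha > 5/2$. The calculation should follow the pattern of the variance estimate in \cite{CSS19} but one order higher, and is expected to be the most technical ingredient of the proof. Once it is in place, the identification of the Brownian limit and the passage from finite-dimensional convergence to weak convergence in $(\calD([0,\infty), \bbR), J_1)$ proceed by standard arguments.
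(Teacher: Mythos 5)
Your treatment of the finite-dimensional distributions is essentially the paper's argument: the same capacity decomposition along a grid (the identity $\Capa(\calR_n)=\Capa(\calR_m)+\Capa(\calR[m,n])-\chi_{m,n}$ with the overlap controlled by the cross Green-function term, as in \cite[Corollary 2.1]{Asselah_Zd}), independence of the blocks via the Markov property and symmetry, the one-dimensional CLT of \cite{CSS19}, and negligibility of the cross terms because $d/\alpha>5/2$ makes $H_d(n)=o(\sqrt n)$. One small remark: you claim the cross terms have variance $o(n)$, but nothing of the sort is needed or available; since $\chi_{m,n}\ge 0$, a first-moment bound (\cite[Lemma 3.2]{CSS19}) plus Markov's inequality already kills them, and that is what the paper uses.

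The tightness step is where your proposal genuinely diverges and where it has a real gap. You propose the moment criterion $\bbE\bigl[(\calC_n-\calC_m-\bbE[\calC_n-\calC_m])^4\bigr]\le C(n-m)^2$, but no such fourth central moment bound for the capacity of the range of a stable walk exists in the cited literature (\cite{CSS19} provides only $\Var(\calC_n)\le Cn$), and you do not prove it; you explicitly defer it as "the most technical ingredient." Expanding the fourth moment of the overlap terms into eight-fold Green-function sums and verifying summability exactly at the threshold $d/\alpha>5/2$ is a substantial piece of work, comparable in difficulty to the variance estimates of \cite{CSS19} but one order higher, so as written the proof is incomplete. (A secondary technical point: for the uninterpolated step process $X^n_t=(\calC_{\floor{nt}}-\bbE[\calC_{\floor{nt}}])/(\sigma_d\sqrt n)$ the single-increment bound $\bbE[|X^n_t-X^n_s|^4]\le C(t-s)^2$ fails for $t-s<1/n$, so you would in any case need the two-increment product form of the criterion.) The paper avoids all of this by verifying Aldous's stopping-time criterion (\cite[Theorems 16.10 and 16.11]{Kallenberg}): using the strong Markov property and the same decomposition, the increment over $[T_n,T_n+h_n]$ is bounded above and below by a copy of $\calC_{\floor{nh_n}}$ plus a Green-function error, and then Chebyshev with $\Var(\calC_{\floor{nh_n}})\le C_1 nh_n=o(n)$ together with the first-moment bound $\bbE[G(\calR_{\floor{nT_n}},\widetilde\calR_{\floor{nh_n}})]\le C_2H_d(Cn)=o(\sqrt n)$ suffices. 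In short: only second-moment information, already in \cite{CSS19}, is needed. If you want to keep your fourth-moment route you must actually supply the fourth-moment estimate; otherwise switch to the Aldous criterion, which closes the argument with the estimates you already have.
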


Our proof is based on the central limit theorem from \cite{CSS19} and a tightness argument. To establish tightness, we utilize the capacity decomposition from \cite[Corollary 2.1]{Asselah_Zd} and combine it with the error term estimates from \cite[Lemma 3.1]{CSS19} which hold under crucial assumptions (\textbf{A1})--(\textbf{A3}). Using a similar reasoning and estimates of the number of intersection points (extracted from \cite{LeGall-Rosen}), we  prove an analogous result for   $\{|\calR_{\lfloor nt\rfloor}|\}_{t\ge0}$.
\begin{theorem}\label{tm:FCLR_for_the_car}
	Assume (\textbf{A1}), (\textbf{A2}), $0<\alpha\le2$, $d/\alpha > 3/2$ and $\bbP(T^+_0=\infty)<1$.
	Then, there is a constant $\sigma_d >0 $ such that 
	\begin{equation*}
	\left\{\frac{|\calR_{\floor{nt}}| - \bbE\bigl[|\calR_{\floor{nt}}|\bigr]}{\sigma_d \sqrt{n}}\right\}_{t\ge0}\, 
	\xrightarrow[n\nearrow\infty]{(\text{J}_1)}\,
	\{B_t\}_{t\ge0}\,.
	\end{equation*}
	where  we use the same notation as in Theorem \ref{tm:FCLR_for_the_cap}.
\end{theorem}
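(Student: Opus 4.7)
The plan is to combine the one-dimensional CLT for $|\calR_n|$ (available under (\textbf{A1}), (\textbf{A2}), $d/\alpha>3/2$ and $\bbP(T^+_0=\infty)<1$ from \cite{LeGall-Rosen}, with Jain--Pruitt \cite{Jain_Pruitt} covering the subcase $d\ge 3$) with a tightness argument in $\calD([0,\infty),\bbR)$ equipped with the $\text{J}_1$ topology, exactly mirroring the strategy outlined after Theorem~\ref{tm:FCLR_for_the_cap} for the capacity process. The intersection-point estimates of \cite{LeGall-Rosen} will replace the capacity decomposition \cite[Corollary 2.1]{Asselah_Zd} and the error bounds \cite[Lemma 3.1]{CSS19}.

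For convergence of finite-dimensional distributions, I fix $0=t_0<t_1<\cdots<t_k$, set $n_i:=\lfloor nt_i\rfloor$ and $\calR_{k,\ell}:=\{S_k,\ldots,S_\ell\}$, and use the elementary decomposition
\[
|\calR_{n_{i+1}}|-|\calR_{n_i}| \,=\, |\calR_{n_i,n_{i+1}}| \,-\, |\calR_{n_i}\cap\calR_{n_i,n_{i+1}}|.
\]
By the Markov property and the i.i.d.\ structure of the increments $\{\xi_i\}$, the vector $(|\calR_{n_i,n_{i+1}}|)_{i=0}^{k-1}$ consists of independent coordinates distributed as $|\calR_{n_{i+1}-n_i}|$; applying the scalar CLT blockwise produces, after centering and normalization by $\sigma_d\sqrt n$, a vector of independent centered Gaussians with variances $(t_{i+1}-t_i)$. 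It then suffices to show the correction terms $|\calR_{n_i}\cap\calR_{n_i,n_{i+1}}|$ are $o_{L^1}(\sqrt n)$, which follows from the intersection estimate $\bbE[|\calR_m\cap\calR'_n|]=o(\sqrt{m+n})$ for independent copies of the walk, valid precisely in the regime $d/\alpha>3/2$ by the local-time/Green-function computations in \cite{LeGall-Rosen}.

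For tightness, with $Y_n(t):=(|\calR_{\lfloor nt\rfloor}|-\bbE[|\calR_{\lfloor nt\rfloor}|])/(\sigma_d\sqrt n)$ I would verify a Billingsley-type moment bound
\[
\bbE\bigl[(Y_n(t)-Y_n(s))^2\,(Y_n(u)-Y_n(t))^2\bigr] \,\le\, C\,(u-s)^{1+\delta},\qquad 0\le s<t<u,
\]
which together with finite-dimensional convergence yields weak convergence in the $\text{J}_1$ topology. The same decomposition as above gives
\[
\Var\bigl(|\calR_{\lfloor nt\rfloor}|-|\calR_{\lfloor ns\rfloor}|\bigr) \,\le\, 2\Var\bigl(|\calR_{\lfloor ns\rfloor,\lfloor nt\rfloor}|\bigr) \,+\, 2\Var\bigl(|\calR_{\lfloor ns\rfloor}\cap\calR_{\lfloor ns\rfloor,\lfloor nt\rfloor}|\bigr),
\]
and each term is $O(n(t-s))$: the first by the variance bound already embedded in the scalar CLT, the second by a second-moment version of the intersection estimate of \cite{LeGall-Rosen}. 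A conditioning argument on $\calF_{\lfloor nt\rfloor}$, which makes the increments over $[s,t]$ and $[t,u]$ essentially independent up to an intersection-type error term, then upgrades the two-point variance bound to the required four-moment bound.

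The main technical obstacle is the intersection correction. While its expectation is controlled by the standard bounds from \cite{LeGall-Rosen}, the tightness step requires \emph{second-moment} control of the intersection count $|\calR_{\lfloor ns\rfloor}\cap\calR_{\lfloor ns\rfloor,\lfloor nt\rfloor}|$, uniformly in $s,t$, and this must be extracted by carefully revisiting the Green-function estimates in that reference. This is the cardinality-analogue of \cite[Lemma 3.1]{CSS19}, and it is precisely the gain in the dimensional threshold — from $d/\alpha>5/2$ for the capacity to $d/\alpha>3/2$ for the cardinality — that must be justified at this step.
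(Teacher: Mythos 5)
Your treatment of the finite-dimensional distributions is essentially the paper's own argument: the exact decomposition of $|\calR_{\floor{nt_j}}|$ into independent block ranges minus intersection corrections (this is Lemma \ref{lm:cardinality_decomposition}), the scalar CLT of \cite{LeGall-Rosen}, Cram\'er--Wold, and the first-moment intersection bound $\bbE[I_n]\le C F_d(n)$ with $F_d(n)=o(\sqrt{n})$ precisely when $d/\alpha>3/2$. That half is sound.

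The gap is in the tightness step. You replace the criterion the paper uses --- the Aldous-type condition of \cite[Theorems 16.10 and 16.11]{Kallenberg} with bounded stopping times $T_n$ and $h_n\to0$, which after the strong Markov property at time $\floor{nT_n}$ needs only $\Var(|\calR_m|)\le Cm$ (\cite[Theorem 4.4]{LeGall-Rosen}) and the same \emph{first-moment} intersection bound --- by Billingsley's product-moment criterion, and this forces you to control second (indeed joint fourth) moments of increments, in particular $\Var\bigl(|\calR_{\floor{ns}}\cap\calR[\floor{ns},\floor{nt}]|\bigr)$. You assert this is $O(n(t-s))$ by ``a second-moment version of the intersection estimate of \cite{LeGall-Rosen}'', but no such estimate is available there, and the obvious bound
\[
\bbE\bigl[|\calR_m\cap\widetilde{\calR}_\ell|^2\bigr]\,\le\,(\ell+1)\,\bbE\bigl[|\calR_m\cap\widetilde{\calR}_\ell|\bigr]\,\le\, C\,\ell\, F_d(m+\ell)
\]
gives $O(n(t-s))$ only when $F_d$ is bounded, i.e.\ $d/\alpha>2$; in the new regime $3/2<d/\alpha\le2$, where $F_d$ grows, your claimed variance bound is unsubstantiated, and the conditioning on $\calF_{\floor{nt}}$ that should upgrade the two-point bound to the four-moment bound inherits the same problem (the conditional second moment of the intersection given the past range is a functional of that range whose moments you again do not control). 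Since you yourself flag this as the main obstacle ``to be extracted by carefully revisiting the Green-function estimates'', the tightness half of the proof is not actually carried out. The remedy is the paper's route: verify condition (ii) with stopping times, where Markov's inequality reduces the intersection term to its first moment, $\bbE[I_{\floor{Cn}}]\le C F_d(\floor{Cn})=o(\sqrt{n})$, and Chebyshev with $\Var(|\calR_{\floor{nh_n}}|)\le C n h_n\to0$ handles the rest; no second-moment intersection estimates are needed, and the threshold $d/\alpha>3/2$ enters exactly through $F_d(n)/\sqrt{n}\to0$, already at the level of first moments rather than at a deeper step as your closing remark suggests.
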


 This result has to be compared with the Donsker's invariance principle for $\{|\calR_n|\}_{n\ge0}$ which was found by Jain and Pruitt \cite{Jain_Pruitt_further}. They proved that performing an appropriate linearization of $\{|\calR_n|\}_{n\ge0}$, the uniform convergence towards a Brownian motion holds in two cases: (i) for all random walks in dimensions $d\geq 3$ which satisfy $\bbP(T^+_0=\infty)<1$, and (ii) for all strongly transient random walks for which a certain moment condition is valid. We notice that this condition is usually not easy to check, see the closing Remark in \cite{Jain_Pruitt_further}.
We establish a slightly more general result, as we prove convergence in the Skorohod space, but for stable random walks only. Notice, however, that we may well have $d<3$.
We also mention that we could easily deduce the locally uniform convergence in the space of continuous functions by employing a suitable linearization.

\subsection*{Acknowledgement}
We are grateful to Prof.\ Alexander M.\ Iksanov (National Taras Shevchenko University of Kyiv) for having drawn our attention to the topic of this paper during the conference \textit{Probability and Analysis 2019} in B\k{e}dlewo, Poland. 

This work has been supported by \textit{Deutscher Akademischer Austauschdienst} (DAAD) and \textit{Ministry of Science and Education of the Republic of Croatia} (MSE) via project \textit{Random Time-Change and Jump Processes}. Financial support through the \textit{Alexander von Humboldt Foundation} and \textit{Croatian Science Foundation} under projects 8958 and 4197 (for N.\ Sandri\'c), and the \textit{Austrian Science Fund} (FWF) under project P31889-N35 and \textit{Croatian Science Foundation} under project 4197 (for S.\ \v Sebek) is gratefully acknowledged.

We also thank the anonymous referees for helpful comments that
have led to improvements to the presentation of the article.

%%%%%%%%%%%%%%
%%%%%%%%%%%%%%

\section{FCLT for the process $\{\calC_{\lfloor nt\rfloor}\}_{t\ge0}$}\label{sec:capacity}
Throughout this section, we assume that  $\{S_n\}_{n\geq0}$ satisfies (\textbf{A1})--(\textbf{A4}). We denote by $G(x,y) $ the corresponding Green function, that is, 
\begin{align*}
G(x,y) \,=\, \sum_{n=0}^\infty \bbP_x (S_n=y)\,, \qquad x, y \in \bbZ^d\,.
\end{align*}
Also, for $A,B\subseteq\ZZ^d$ we denote
\begin{align*}
G(A, B) \,=\, \sum_{x\in A}\sum_{y\in B} G(x,y)\,.
\end{align*}

In order to prove a FCLT for $\{\calC_{\lfloor nt\rfloor}\}_{t\ge0}$, we employ the following classical two-step scheme, see \cite[Theorems 16.10 and  16.11]{Kallenberg}.
Let $\{X^{n}\}_{n \ge 0}$ be a sequence of random elements in the Skorohod space $\calD([0, \infty), \bbR)$ endowed with the $\text{J}_1$ topology. The sequence $\{X^{n}\}_{n \ge 0}$ converges weakly to a random element $X$ (in $\calD([0, \infty), \bbR)$) if the following two conditions are satisfied: 
\begin{enumerate}[(i)]
	\item The finite-dimensional distributions of $\{X^{n}\}_{n \ge 0}$ converge weakly to the finite-dimensional distributions of $X$.
	\item For any bounded sequence $\{T_n\}_{n \ge1}$ of $\{X^{n}\}_{n \ge 0}$-stopping times,  any sequence $\{h_n\}_{n \ge 1}\subset[0,\infty)$  converging to zero, and any $\varepsilon > 0$, it holds that
		\begin{equation*}
		\lim_{n \nearrow \infty}\bbP \bigl(|X_{T_n + h_n}^{n} - X_{T_n}^{n}| \ge \varepsilon\bigr)\, =\, 0\,.
	\end{equation*}
\end{enumerate}

\begin{proof}[Proof of Theorem \ref{tm:FCLR_for_the_cap}] 
We consider the following sequence of random elements which are defined in the space $\calD([0, \infty), \bbR)$,
\begin{equation}\label{eq:def_of_Y^n_t}
	X_t^{n} \,= \,\frac{\calC_{\floor{nt}} - \bbE\bigl[\calC_{\floor{nt}}\bigr]}{\sigma_d \sqrt{n}}\,,\qquad  n\ge1\,, 
\end{equation}
where $\sigma_d$ is a positive constant. We  prove validity of conditions (i) and (ii) under assumptions (\textbf{A1})--(\textbf{A4}). Let us start by showing condition (i).
\smallskip

\noindent 
\textit{Condition (i)}.
By \cite[Theorem 1.1.]{CSS19}, we have that for any $t>0$
\begin{equation}\label{eq:CLT-Y_t^n}
	X_t^{n}\, =\, \frac{\calC_{\floor{nt}} - \bbE\bigl[\calC_{\floor{nt}}\bigr]}{\sigma_d \sqrt{\floor{nt}}} \cdot \frac{\sqrt{\floor{nt}}}{\sqrt{nt}} \cdot \frac{\sqrt{nt}}{\sqrt{n}}\, \xrightarrow[n\nearrow\infty]{(\text{d})}\, \calN(0, t)\,,
\end{equation}
where\ \   $\xrightarrow[n\nearrow\infty]{(\text{d})}$ stands for the convergence
in distribution, the constant $\sigma_d>0$ is determined in \cite[Theorem 1.1.]{CSS19} and
$\calN(0, t)$ stands for a Gaussian random variable  with mean zero and variance $t$.
Let $k \ge1$ be an arbitrary integer and choose $0 = t_0 < t_1 < t_2 < \cdots < t_k $. We need to prove that
\begin{equation*}
	(X_{t_1}^{n}, X_{t_2}^{n}, \ldots, X_{t_k}^{n})\, \xrightarrow[n\nearrow\infty]{(\text{d})}\, (B_{t_1}, B_{t_2}, \ldots, B_{t_k})\,.
\end{equation*}
In view of the Cram\'{e}r-Wold theorem \cite[Corollary 5.5]{Kallenberg}, it suffices to show that
\begin{equation}\label{eq:cvg_CW_cap}
	\sum_{j = 1}^k \nu_j X_{t_j}^{n}\, \xrightarrow[n\nearrow\infty]{(\text{d})}\, \sum_{j = 1}^k \nu_j B_{t_j}\,, \qquad  (\nu_1, \nu_2, \ldots, \nu_k) \in \bbR^k\,.
\end{equation}
To prove \eqref{eq:cvg_CW_cap}, we  first find lower and upper bounds for $\calC_{\floor{nt_j}}$, $j=1,\dots,k$.
We  follow closely the arguments from the  capacity decomposition \cite[Corollary 2.1]{Asselah_Zd}. For  $i =1,  \ldots, k,$ we have that
\begin{align*}
	\calC_{\floor{nt_i}}
	& \, =\, \Capa\,\bigl(\calR_{\floor{nt_1}} \cup \calR[\floor{nt_1}, \floor{nt_i} ] \bigr)  \\
	& \,=\, \Capa\,\bigl( \bigl(\calR_{\floor{nt_1}} - S_{\floor{nt_1}}\bigr) \cup \bigl(\calR[ \floor{nt_1}, \floor{nt_i}] - S_{\floor{nt_1}}\bigr) \bigr)\,,
	\end{align*}
	where for $ 1\le m\le n$, $\calR[m,n]:=\{S_m,\dots,S_n\}$. 
By the Markov property, the two random variables 
\begin{align*}
\calR^{(1)}_{\floor{nt_1}}\, :=\, \calR_{\floor{nt_1}} - S_{\floor{nt_1}}\qquad \text{and}\qquad
\calR^{(2)}_{\floor{nt_i} - \floor{nt_1}} \,:=\, \calR[ \floor{nt_1}, \floor{nt_i}] - S_{\floor{nt_1}}
\end{align*}
are independent, and $\calR^{(2)}_{\floor{nt_i} - \floor{nt_1}} $ has the same law as $\calR_{\floor{nt_i} - \floor{nt_1}} $. The symmetry of the random walk $\{S_n\}_{n \ge 0}$ implies that $\calR^{(1)}_{\floor{nt_1}} $ is equal in law to $\calR_{\floor{nt_1}} $. Hence, \cite[Proposition 1.2]{Asselah_Zd} implies
\begin{align*}	
\calC_{\floor{nt_i}}
	& \,\ge\, 
	\Capa\,\bigl(\calR^{(1)}_{\floor{nt_1}}\bigr) + \Capa\,\bigl(\calR^{(2)}_{\floor{nt_i} - \floor{nt_1}}\bigr) 
	- 2G\bigl(\calR^{(1)}_{\floor{nt_1}}, \calR^{(2)}_{\floor{nt_i} - \floor{nt_1}}\bigr)\,.
\end{align*}
We now present how to deal with the next step of the decomposition. We have
\begin{align*}
\Capa\,\bigl(\calR^{(2)}_{\floor{nt_i} - \floor{nt_1}}\bigr)
&\,=\,
\Capa \,\bigl(\calR[ \floor{nt_1}, \floor{nt_i}] - S_{\floor{nt_1}})\bigr)\\
&\,=\,\Capa\, \bigl( \calR[ \floor{nt_1}, \floor{nt_i}] \bigr)\\
&\,=\,\Capa\,\bigl( \bigl(\calR[ \floor{nt_1}, \floor{nt_2}] - S_{\floor{nt_2}}\bigr) \cup \bigl(\calR[ \floor{nt_2}, \floor{nt_i}] - S_{\floor{nt_2}}\bigr) \bigr)\,.
\end{align*}
Similarly as before, the two random variables
\begin{align*}
\calR^{(2)}_{\floor{nt_2}-\floor{nt_1}}\, :=\, \calR[ \floor{nt_1}, \floor{nt_2}] - S_{\floor{nt_2}}
\qquad \text{and}\qquad
\calR^{(3)}_{\floor{nt_i}-\floor{nt_2}} \,:=\, \calR[ \floor{nt_2}, \floor{nt_i}] - S_{\floor{nt_2}}
\end{align*}
are independent. Also, the random variable $\calR^{(2)}_{\floor{nt_2}-\floor{nt_1}} $ has the same law as
$\calR_{\floor{nt_2}-\floor{nt_1}} $, and $\calR^{(3)}_{\floor{nt_i}-\floor{nt_2}}$ has the same law as $\calR_{\floor{nt_i}-\floor{nt_2}}$. If we continue with this procedure and use the same arguments as above, together with subadditivity property of the capacity (see \cite[Proposition 25.11]{Spitzer}) for the upper bound, we  obtain the following estimates
\begin{equation*}\label{eq:LB_for_Cap_ntj}
	  \sum_{i = 1}^j \calC^{(i)}_{\floor{nt_i} - \floor{nt_{i - 1}}} - 2\sum_{i = 1}^{j - 1}\calE^{(i)}_{\floor{nt_j}}
	  \,\leq \,
	  \calC_{\floor{nt_j}}
	 \, \leq \,
	  \sum_{i = 1}^j \calC^{(i)}_{\floor{nt_i} - \floor{nt_{i - 1}}}\,,\qquad j=1,\ldots ,k\,,
\end{equation*}
where 
\begin{equation*}
\calC^{(i)}_{\floor{nt_i} - \floor{nt_{i - 1}}}\, :=\, \Capa\,\bigl(\calR^{(i)}_{\floor{nt_i} - \floor{nt_{i - 1}}}\bigr)
\qquad \text{and}\qquad
	\calE^{(i)}_{\floor{nt_j}} \,:=\, G\bigl(\calR^{(i)}_{\floor{nt_i}-\floor{nt_{i-1}}}, \calR^{(i + 1)}_{\floor{nt_{j}}-\floor{nt_{i}}}\bigr)\,.
\end{equation*}
The random variables $\calC^{(i)}_{\floor{nt_i} - \floor{nt_{i - 1}}}$, $i=1,\dots,k,$ are independent, $\calR^{(i)}_{\floor{nt_i} - \floor{nt_{i - 1}}}$ has the same law as $\calR_{\floor{nt_i} - \floor{nt_{i - 1}}}$ and 
$	\calE^{(i)}_{\floor{nt_j}}$ has the same law as 
$G(\calR_{\floor{nt_i}-\floor{nt_{i-1}}}, \widetilde{\calR}_{\floor{nt_{j}}-\floor{nt_{i}}})$, with $\widetilde{\calR}_{\floor{nt_{j}}-\floor{nt_{i}}}$ being an independent copy of $\calR_{\floor{nt_{j}}-\floor{nt_{i}}}$.  

We now find lower and upper bounds for the left-hand side expression in \eqref{eq:cvg_CW_cap}.
For $\nu_j \ge 0$, we have
\begin{equation*}
	\nu_j \sum_{i = 1}^j \calC^{(i)}_{\floor{nt_i} - \floor{nt_{i - 1}}} - 
	2 \nu_j \sum_{i = 1}^{j - 1} \calE^{(i)}_{\floor{nt_j}}
	\,\le\, \nu_j \calC_{\floor{nt_j}} 
	\,\le\, \nu_j \sum_{i = 1}^j \calC^{(i)}_{\floor{nt_i} - \floor{nt_{i - 1}}}\,,
\end{equation*}
and for $\nu_j< 0$,
\begin{equation*}
		\nu_j \sum_{i = 1}^j \calC^{(i)}_{\floor{nt_i} - \floor{nt_{i - 1}}} - 
	2 \nu_j \sum_{i = 1}^{j - 1} \calE^{(i)}_{\floor{nt_j}}
	\,\ge\, \nu_j \calC_{\floor{nt_j}} 
	\,\ge\, \nu_j \sum_{i = 1}^j \calC^{(i)}_{\floor{nt_i} - \floor{nt_{i - 1}}}\,.
\end{equation*}
Thus, by splitting the sum into two parts, we obtain the following lower bound
\begin{align}\label{eq:2error terms}
	\sum_{j = 1}^k \nu_j X_{t_j}^{n}
	&  \,\ge\, 
	\sum_{\substack{1\leq j\leq k \\\nu_j\geq 0}} \frac{1}{\sigma_d \sqrt{n}} \left(\nu_j \sum_{i = 1}^j 
	\bigl(\calC^{(i)}_{\floor{nt_i} - \floor{nt_{i - 1}}} - 
		\bbE\bigl[\calC^{(i)}_{\floor{nt_i} - \floor{nt_{i - 1}}} \bigr]\bigr) 
-2\nu_j 	 \sum_{i = 1}^{j - 1} \calE^{(i)}_{\floor{nt_j}}\right) \nonumber\\
&\qquad + 
\sum_{\substack{1\leq j\leq k \\\nu_j< 0}} \frac{1}{\sigma_d \sqrt{n}} \left(\nu_j \sum_{i = 1}^j 
	\bigl(\calC^{(i)}_{\floor{nt_i} - \floor{nt_{i - 1}}} - 
		\bbE\bigl[\calC^{(i)}_{\floor{nt_i} - \floor{nt_{i - 1}}} \bigr]\bigr) 
+2\nu_j 	 \sum_{i = 1}^{j - 1} \bbE\bigl[\calE^{(i)}_{\floor{nt_j}}\bigr]\right)\nonumber \\
	& \,=\, \sum_{i = 1}^k \left(\sum_{j = i}^k \nu_j\right) J^{(i)}_n 
	- \frac{2}{\sigma_d \sqrt{n}} \sum_{\substack{1\leq j\leq k \\\nu_j\geq 0}}
	 \nu_j \sum_{i = 1}^{j - 1} \calE^{(i)}_{\floor{nt_j}}
	 +
	 \frac{2}{\sigma_d \sqrt{n}} \sum_{\substack{1\leq j\leq k \\\nu_j< 0}}
	 \nu_j \sum_{i = 1}^{j - 1} \bbE \bigl[ \calE^{(i)}_{\floor{nt_j}}\bigr]\,,
\end{align}
where
\begin{align*}
J^{(i)}_n \,=\,\frac{\calC^{(i)}_{\floor{nt_i} - \floor{nt_{i - 1}}} - 
					\bbE\bigl[ \calC^{(i)}_{\floor{nt_i} - \floor{nt_{i - 1}}}  \bigr]}{\sigma_d \sqrt{n}}\,, \qquad i=1,\dots,k\,.
\end{align*}
We now study weak convergence  of $\{J_n^{(i)}\}_{n\ge1}$ and for this we can replace $\{\calC_n^{(i)}\}_{n\ge0}$ by $\{\calC_n\}_{n\ge0}$.
We clearly have
\begin{align}\label{eq:trivial}
\floor{x-y}\,\leq\, \floor{x}-\floor{y}\,\leq\, \floor{x-y}+1\,,\qquad  x\geq y\geq 0\,.
\end{align}
Also, the map $n\mapsto \calC_n$ is monotone, and it holds that
\begin{equation}\label{eq:C_n+1_C_n}
	\calC_{n + 1}\, =\, \Capa\,(\calR_{n + 1}) \,=\, \Capa\,(\calR_n \cup \{S_{n + 1}\}) \,\le\, \Capa\,(\calR_n) + \Capa\,(S_{n + 1})\, \le\, \calC_n + 1\,.
\end{equation}
Combining \eqref{eq:trivial} and \eqref{eq:C_n+1_C_n} with  \eqref{eq:CLT-Y_t^n}, 
we easily conclude that
\begin{equation*}
	J_n^{(i)}\, \xrightarrow[n\nearrow\infty]{(\text{d})}\, \calN(0, t_i - t_{i - 1})\,, \qquad i=1, 2, \ldots, k\,.
\end{equation*}
Next, we show that the two last terms in \eqref{eq:2error terms} are negligible. Indeed, 
the Markov inequality together with \cite[Lemma 3.2]{CSS19} implies that there is a constant $C>0$ such that for every $\varepsilon > 0$, 
\begin{equation}\label{eq:bound_with_Hd}
	\bbP\bigl(n^{-1/2}\calE^{(i)}_{\floor{nt_j}} > \varepsilon \bigr) 
\,	\le\, \frac{\bbE\bigl[\calE^{(i)}_{\floor{nt_j}}\bigr]}{\varepsilon \sqrt{n}}
	\,\le\, \frac{\bbE\bigl[ G\bigl(\calR_{\floor{nt_j}}, \widetilde{\calR}_{\floor{nt_j}}\bigr)\bigr]}{\varepsilon \sqrt{n}}
	 \,\le\, \frac{C H_d(\floor{nt_j})}{\varepsilon \sqrt{n}},
\end{equation}
where
	\begin{equation}\label{eq:def_of_Hd}
			H_d(n) \,=\, 
	\begin{cases}
		1\,, &\quad d/\alpha  > 3\,,\\
		\sum_{k=1}^n k^{-1}\ell (k)^{-d}\,, & \quad d/\alpha = 3\,,\\
		n^3 (b(n))^{-d}\,, & \quad 2 < d/\alpha < 3\,,\\
	\end{cases}	
	\end{equation}
and the function $b(x)$ is necessarily of the form
\begin{equation}\label{eq:shape_of_function_b}
	b(x) \,=\, x^{1/\alpha} \ell(x),\qquad x\geq 0\,,
\end{equation}
where $\ell(x)$ is a slowly varying function, see \cite{BGT_book}. Moreover, by \cite[Lemma 2.2]{LeGall-Rosen}, the function $n \mapsto \sum_{k=1}^n k^{-1}\ell (k)^{-d}$ is slowly varying. Hence, $H_d(n)$ is a slowly varying function in the case $d/\alpha \ge 3$ and regularly varying function with index strictly smaller than $1/2$ when $d/\alpha \in (5/2, 3)$. This implies that the last term in \eqref{eq:bound_with_Hd} tends to zero as $n \rightarrow \infty$.
Recall that the random variables $\calC^{(i)}_{\floor{nt_i} - \floor{nt_{i - 1}}}$, $i=1,\ldots ,k$, are independent. Therefore, after performing the same analysis as in \eqref{eq:2error terms} for the upper bound,
we conclude that 
\begin{equation*}
	\sum_{j = 1}^k \nu_j X_{t_j}^{n}\, \xrightarrow[n\nearrow\infty]{(\text{d})}\, 
	\calN\left(0, \sum_{i = 1}^k \Bigl(\sum_{j = i}^k \nu_j\Bigr)^2 (t_i - t_{i - 1})\right).
\end{equation*}
We finally notice that 
\begin{equation*}
	\sum_{i = 1}^k \Bigl(\sum_{j = i}^k \nu_j\Bigr)^2 (t_i - t_{i - 1})\, =\, \begin{bmatrix}
		\nu_1 \\
		\nu_2 \\
		\nu_3 \\
		\vdots \\
		\nu_k
	\end{bmatrix}^{\, \mathrm{t}}
	\begin{bmatrix}
		t_1 & t_1 & t_1 & \ldots & t_1 \\
		t_1 & t_2 & t_2 & \ldots & t_2 \\
		t_1 & t_2 & t_3 & \ldots & t_3 \\
		\vdots & \vdots & \vdots & \ddots & \vdots \\
		t_1 & t_2 & t_3 & \ldots & t_k
	\end{bmatrix}
	\begin{bmatrix}
		\nu_1 \\
		\nu_2 \\
		\nu_3 \\
		\vdots \\
		\nu_k
	\end{bmatrix}
	\,= \,\sum_{i = 1}^k \bigl(\nu_i^2 t_i + 2 \sum_{j > i} \nu_i \nu_j t_i\bigr)\,.
\end{equation*}
Hence, the finite-dimensional distributions of $\{X^{n}\}_{n\ge1}$ converge weakly to the finite-dimensional distributions of a one-dimensional  standard Brownian motion.
\smallskip

\noindent 
\textit{Condition (ii)}. Let $\{T_n\}_{n \ge1}$ be a bounded sequence of $\{X^{n}\}_{n\ge1}$-stopping times and $\{h_n\}_{n \ge1}\subset[0,\infty)$ an arbitrary sequence  which converges to zero. We want to prove that
\begin{equation}\label{ToShow}
	X^{n}_{T_n + h_n} - X^{n}_{T_n}\, \xrightarrow[n\nearrow\infty]{\bbP}\, 0\,,
\end{equation}
where $\,\,\xrightarrow[n\nearrow\infty]{\bbP}\,$ stands for the convergence in probability.
By \eqref{eq:def_of_Y^n_t}, we have
\begin{equation*}
	X^{n}_{T_n + h_n} - X^{n}_{T_n} \,=\, \frac{\calC_{\floor{nT_n + nh_n}} - \bbE\bigr[\calC_{\floor{nT_n + nh_n}}\bigl]}{\sigma_d \sqrt{n}} - \frac{\calC_{\floor{nT_n}} - \bbE\bigl[\calC_{\floor{nT_n}}\bigr]}{\sigma_d \sqrt{n}}\,.
\end{equation*}
Proceeding as in  the  capacity decomposition from \cite[Corollary 2.1]{Asselah_Zd} and combining  the strong Markov property with the subadditivity and monotonicity of the capacity yield
\begin{align*}
\calC_{\floor{n(T_n + h_n)}} 
	\,\le\, \calC_{\floor{nT_n} + \floor{nh_n}+1}
	\,\le\, \calC^{(1)}_{\floor{nT_n}} + \calC^{(2)}_{\floor{nh_n}+1  }\,,
\end{align*}
and
\begin{equation*}
\calC_{\floor{n(T_n + h_n)}} 
\,\geq\, 
 \calC_{\floor{nTn}+\floor{nh_n}}
 \,\geq\, 
	\calC^{(1)}_{\floor{nT_n}} 
	+ \calC^{(2)}_{\floor{nh_n}} 
	- 2\calE\bigl(\floor{nT_n}, \floor{nh_n}\bigr) \,,
\end{equation*}
where $\calC^{(1)}_{\floor{nT_n}}$ and $\calC^{(2)}_{\floor{nh_n}}$  are independent and  have the same law as $\calC_{\floor{nT_n}}$ and $\calC_{\floor{nh_n}}$, respectively. Moreover, the random variable $\calE(\floor{nT_n}, \floor{nh_n})  $ has the same law as $G(\calR_{\floor{nT_n}}, \widetilde{\calR}_{\floor{nh_n}})$, with $\widetilde{\calR}_{\floor{nh_n}}$ being an independent copy of $\calR_{\floor{nh_n}}$.

Using these inequalities, we now bound the expression $X^{n}_{T_n + h_n} - X^{n}_{T_n}$ from below and above with quantities converging to zero in probability.
We start with the lower bound
\begin{align*}
	X^{n}_{T_n + h_n} - X^{n}_{T_n}
	& \,\ge\,  \frac{\calC^{(2)}_{\floor{nh_h}} - \bbE\bigl[\calC_{\floor{nh_h}+1}\bigr]}{\sigma_d \sqrt{n}} - \frac{2\calE\bigl(\floor{nT_n}, \floor{nh_n}\bigr)  }{\sigma_d \sqrt{n}}\\
	& \,=\, \frac{\calC^{(2)}_{\floor{nh_h}} - \bbE\bigl[\calC_{\floor{nh_h}}\bigr]}{\sigma_d \sqrt{n}} - \frac{2\calE\bigl(\floor{nT_n}, \floor{nh_n}\bigr)  }{\sigma_d \sqrt{n}} + \frac{\bbE\bigl[\calC_{\floor{nh_h}}\bigr] - \bbE\bigl[\calC_{\floor{nh_h} + 1}\bigr]}{\sigma_d \sqrt{n}} \\
	& \,\ge\, \frac{\calC^{(2)}_{\floor{nh_h}} - \bbE\bigl[\calC_{\floor{nh_h}}\bigr]}{\sigma_d \sqrt{n}} - \frac{2\calE\bigl(\floor{nT_n}, \floor{nh_n}\bigr)  }{\sigma_d \sqrt{n}} - \frac{1}{\sigma_d \sqrt{n}}\,,
\end{align*}
where in the last line we used \eqref{eq:C_n+1_C_n}.
It suffices to show that
\begin{equation}\label{eq:LB_goes_to_0_in_P_cap}
	\frac{\calC^{(2)}_{\floor{nh_h}} - \bbE\bigl[\calC_{\floor{nh_h}}\bigr]}{\sigma_d \sqrt{n}} - \frac{2\calE\bigl(\floor{nT_n}, \floor{nh_n}\bigr)  }{\sigma_d \sqrt{n}} \,\xrightarrow[n \nearrow \infty]{\bbP}\, 0\,.
\end{equation}
Take arbitrary $\varepsilon > 0$. The  Markov inequality together with the fact that there is a constant $C>0$ such that $\sup_{n\ge1}\max\{T_n,h_n\}\leq C$ implies
\begin{align*}
	\bbP
	& \left( n^{-1/2} \left| \bigl(\calC^{(2)}_{\floor{nh_h}} - \bbE\bigl[\calC_{\floor{nh_h}}\bigr] \bigr) - 2\calE\bigl(\floor{nT_n}, \floor{nh_n}\bigr)  \right| > \sigma_d \varepsilon\right) \\
	& \,\le\, \bbP \left(2\bigl|\calC^{(2)}_{\floor{nh_n}} - \bbE\bigl[\calC_{\floor{nh_n}}\bigr]\bigr| > \varepsilon \sigma_d \sqrt{n}\right) + \bbP \left(4 \calE\bigl(\floor{nT_n}, \floor{nh_n}\bigr)   > \varepsilon \sigma_d \sqrt{n}\right) \\
	& \,\le\, \frac{4 \Var\bigl(\calC_{\floor{nh_n}}\bigr)}{\varepsilon^2 \sigma_d^2 n} + \frac{4 \bbE\bigl[G\bigl(\calR_{\floor{nT_n}}, \widetilde{\calR}_{\floor{nh_n}}\bigr)\bigr]}{\varepsilon \sigma_d \sqrt{n}}\\
&	\,\le\, \frac{4C_1 n h_n}{\varepsilon^2 \sigma_d^2 n} + \frac{4C_2 H_d(Cn)}{\varepsilon \sigma_d \sqrt{n}}\, \xrightarrow[n \nearrow \infty]{} \,0\,,
\end{align*}
where in the last line we applied  \cite[Lemma 4.3]{CSS19} to conclude that there is a constant $C_1>0$ such that $\Var(\calC_n) \le C_1 n$ for all $n \ge1$, and  \cite[Lemma 3.2]{CSS19} to find a constant $C_2>0$ such that $\bbE[G(\calR_{\floor{nT_n}}, \widetilde{\calR}_{\floor{nh_n}})] \le C_2 H_d(Cn)$ for all $n \ge1$, where $H_d(n)$ is defined in \eqref{eq:def_of_Hd}. Its index of regular variation is strictly smaller than $1/2$ for all $d/\alpha > 5/2$. This gives us  \eqref{eq:LB_goes_to_0_in_P_cap}.

To obtain the upper bound, we write
\begin{align*}
	X^{n}_{T_n + h_n} - X^{n}_{T_n}
	& \,\le\, \frac{\calC^{(2)}_{\floor{nh_h} + 1} - \bbE\bigl[\calC_{\floor{nh_h}}\bigr]}{\sigma_d \sqrt{n}} + \frac{2\bbE\bigl[G\bigl(\calR_{\floor{nT_n}}, \widetilde{\calR}_{\floor{nh_n}}\bigr)\bigr]}{\sigma_d \sqrt{n}} \\
	&\, =\, \frac{\calC^{(2)}_{\floor{nh_h}} - \bbE\bigl[\calC_{\floor{nh_h}}\bigr]}{\sigma_d \sqrt{n}} + \frac{2\bbE\bigl[G\bigl(\calR_{\floor{nT_n}}, \widetilde{\calR}_{\floor{nh_n}}\bigr)\bigr]}{\sigma_d \sqrt{n}} + \frac{\calC^{(2)}_{\floor{nh_h} + 1} - \calC^{(2)}_{\floor{nh_h}}}{\sigma_d \sqrt{n}} \,.
\end{align*}
One can easily show that in view of \eqref{eq:C_n+1_C_n} the last term converges to zero in law (and whence in probability). 
Finally,  for the remaining terms, we use the same arguments as before, which allow us to conclude \eqref{ToShow} and the proof is finished. 
\end{proof}

The next theorem is the corresponding FCLT for a simple random walk. Recall that a simple random walk does not satisfy (\textbf{A4}).
\begin{theorem}\label{rem:FCLT_for_simple_RW}
Let $\{S_n\}_{n\geq 0}$ be a symmetric simple  random walk in $\mathbb{Z}^d$ with $d\geq 6$. Then, the following convergence holds
\begin{equation*}
\left\{\frac{\calC_{\floor{nt}} - \bbE\bigl[\calC_{\floor{nt}}\bigr]}{\sigma_d \sqrt{n}}\right\}_{t\ge0} \,
\xrightarrow[n\nearrow\infty]{(\text{J}_1)}\,
\{B_t\}_{t\ge0}\,,
\end{equation*}
where $\sigma_d>0$ is the constant from \cite[Theorem 1.1]{Asselah_Zd}.
\end{theorem}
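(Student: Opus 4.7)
The plan is to repeat the two-step scheme from the proof of Theorem \ref{tm:FCLR_for_the_cap} almost verbatim, substituting each appeal to results from \cite{CSS19} by the analogous result for a simple random walk in dimension $d\geq 6$ taken from \cite{Asselah_Zd}. The key observation is that assumption (\textbf{A4}) entered the proof of Theorem \ref{tm:FCLR_for_the_cap} only through three ingredients: the one-dimensional CLT \cite[Theorem 1.1]{CSS19}, the Green-function bound \cite[Lemma 3.2]{CSS19}, and the variance bound \cite[Lemma 4.3]{CSS19}. The capacity decomposition from \cite[Corollary 2.1]{Asselah_Zd}, the subadditivity and monotonicity of $\Capa$, the strong Markov property, and the symmetry-based identification in law of the shifted range pieces all apply to the simple random walk without any modification.

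For condition (i), I would invoke \cite[Theorem 1.1]{Asselah_Zd} to obtain the marginal CLT $(\calC_n - \bbE[\calC_n])/(\sigma_d\sqrt{n}) \xrightarrow[n\nearrow\infty]{(\text{d})} \calN(0,1)$ with exactly the constant $\sigma_d>0$ prescribed in the statement. Combining this with the capacity decomposition and the Cram\'er--Wold device then proceeds exactly as in the proof of Theorem \ref{tm:FCLR_for_the_cap}, provided that the cross-error terms $\calE^{(i)}_{\lfloor nt_j\rfloor}$ are negligible after division by $\sqrt{n}$. For the simple random walk in $d\geq 6$ we are in the parameter regime $d/\alpha = d/2 \geq 3$, so the standard Green-function decay $G(x,y) \leq C(1+|x-y|)^{2-d}$ with $d-2\geq 4$ yields $\bbE[G(\calR_n,\widetilde{\calR}_n)] \leq C$ uniformly in $n$. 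This is the analogue of \cite[Lemma 3.2]{CSS19} in the easy regime $d/\alpha \geq 3$ and is essentially contained in the Green-function estimates used in \cite{Asselah_Zd}. The identification of the covariance structure of the Brownian limit via the displayed $k\times k$ matrix in the proof of Theorem \ref{tm:FCLR_for_the_cap} transfers word for word.

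For condition (ii), the only additional ingredient needed is a linear variance bound $\Var(\calC_n)\leq Cn$ for the simple random walk in $d\geq 6$, which is obtained in \cite{Asselah_Zd} as part of their proof of the CLT. With this input together with the Green-function estimate above, the entire tightness argument from the proof of Theorem \ref{tm:FCLR_for_the_cap}---namely, the lower/upper decomposition of $X^{n}_{T_n+h_n}-X^{n}_{T_n}$ into a centered capacity piece $\sigma_d^{-1}n^{-1/2}(\calC^{(2)}_{\lfloor nh_n\rfloor} - \bbE[\calC_{\lfloor nh_n\rfloor}])$, a cross Green-function piece $\sigma_d^{-1}n^{-1/2}\calE(\lfloor nT_n\rfloor,\lfloor nh_n\rfloor)$, and an $O(n^{-1/2})$ remainder coming from \eqref{eq:C_n+1_C_n}---carries over line by line, and Markov's inequality handles each piece. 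The main (and rather mild) obstacle is therefore pure bookkeeping: one must verify that (\textbf{A4}) is never used in the proof of Theorem \ref{tm:FCLR_for_the_cap} outside of the three quoted lemmas from \cite{CSS19}, and that the Green-function and variance bounds borrowed from \cite{Asselah_Zd} apply uniformly in $n$ with the appropriate polynomial scaling. Since $d\geq 6$ places us safely inside the regime where the slowly varying corrections in $H_d(n)$ collapse to a constant, no delicate estimate is required and no new probabilistic idea is needed.
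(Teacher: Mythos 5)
Your proposal is essentially the paper's own proof: the paper likewise reruns the argument for Theorem \ref{tm:FCLR_for_the_cap} verbatim, replacing \cite[Theorem 1.1]{CSS19}, \cite[Lemma 3.2]{CSS19} and \cite[Lemma 4.3]{CSS19} by \cite[Theorem 1.1]{Asselah_Zd}, \cite[Lemma 3.2]{Asselah_Zd} and \cite[Lemma 3.5]{Asselah_Zd}, respectively. The only slip is your claim that $\bbE\bigl[G\bigl(\calR_n,\widetilde{\calR}_n\bigr)\bigr]\le C$ uniformly in $n$ for all $d\ge 6$: at the boundary case $d=6$ (i.e.\ $d/\alpha=3$) this expectation grows like $\log n$ rather than staying bounded, which is why the paper quotes \cite[Lemma 3.2]{Asselah_Zd} instead of a naive Green-function decay bound; since $\log n = o(\sqrt{n})$, this does not affect the validity of your argument.
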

\begin{proof}
To prove the theorem, we use the same reasoning as in the proof of Theorem \ref{tm:FCLR_for_the_cap} but we use results from \cite{Asselah_Zd} instead of results from \cite{CSS19}. More precisely, instead of \cite[Lemma 3.2]{CSS19}, we use \cite[Lemma 3.2]{Asselah_Zd}, \cite[Lemma 4.3]{CSS19} has to be replaced with \cite[Lemma 3.5]{Asselah_Zd}, and, finally, \cite[Theorem 1.1]{CSS19} is replaced with \cite[Theorem 1.1]{Asselah_Zd}.
\end{proof}

\section{FCLT for the process  $\{|\calR_{\lfloor nt\rfloor}|\}_{t\ge0}$}\label{sec:cardinality}
In this section, we show how  to adapt the methods from the previous section and prove Theorem \ref{tm:FCLR_for_the_car}. %prove the functional central limit theorem for the cardinality $|\calR_n| $ of the range for stable random walks satisfying
Recall that we assume that $\{S_n\}_{n\ge0}$ satisfies (\textbf{A1}), (\textbf{A2}), $d/\alpha>3/2$ and $\bbP(T^+_0=\infty)<1$ (if $\bbP(T^+_0=\infty)=1$, then $|\calR_n|=n+1$ a.s.). Also, notice that by \cite[Proposition 2.4]{LeGall-Rosen} if $\alpha >1$, then $\bbP(T^+_0=\infty)<1$ holds true.

Before we prove Theorem \ref{tm:FCLR_for_the_car}, we formulate the following lemma which shows how the range of a random walk up to time $n$ can be decomposed into two independent ranges. This idea was first used by Le Gall in \cite{LeGall-French} to establish a central limit theorem for $\{|\calR_n|\}_{n\ge0}$.
Let $\{S_n\}_{n\ge0}$ and $\{\widetilde S_n\}_{n\ge0}$ be two independent and identically distributed random walks  (defined on the same probability space). By $I_n$, we denote the number of intersection points up to time $n$ of the paths of $\{S_n\}_{n\ge0}$ and $\{\widetilde S_n\}_{n\ge0}$, that is,
\begin{align*}
I_{ n}\,=\, \aps{\calR_{ n} \cap \widetilde{\calR}_{ n}}\,,
\end{align*}
where $\{\calR_n\}_{n\ge0}$ is the range of $\{S_n\}_{n\ge0}$, and $\{\widetilde{\calR}_n\}_{n\ge0}$ is the range of $\{\widetilde S_n\}_{n\ge0}$.
\begin{lemma}\label{lm:cardinality_decomposition}
For all $m, n \in \bbN$, we have
	\begin{equation*}
	 \aps{\calR_{m + n}} \,=\,	\aps{\calR^{(1)}_m} + \aps{\calR^{(2)}_n} - \calE(m,n)  \,,
	\end{equation*}
	where the random variables $\calR^{(1)}_m$ and $\calR^{(2)}_n$ are independent and have  the same law as $\calR_m$ and $\calR_n$, respectively. Moreover, the random variable $\calE (m,n)$ has the same law as $|\calR_m \cap \widetilde{\calR}_n|$.
In particular, $|\calR_m \cap \widetilde{\calR}_n| \le I_{m+n}$.
\end{lemma}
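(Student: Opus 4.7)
The plan is to start from the elementary set identity
\[
\calR_{m+n}\,=\,\calR_m\cup\calR[m,m+n]\,,
\]
and apply the inclusion--exclusion principle for cardinalities, which immediately yields
\[
|\calR_{m+n}|\,=\,|\calR_m|+|\calR[m,m+n]|-|\calR_m\cap\calR[m,m+n]|\,.
\]
This is the decomposition of the first $m+n$ steps of the walk into the past $\{S_0,\dots,S_m\}$ and the future $\{S_m,\dots,S_{m+n}\}$, overlapping only in the common values.

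Next I would invoke the Markov property together with translation invariance of cardinality. Set $\calR^{(1)}_m:=\calR_m$ and $\calR^{(2)}_n:=\calR[m,m+n]-S_m$. Since the shifted future walk $\{S_{m+k}-S_m\}_{k=0}^n$ has increments $\xi_{m+1},\dots,\xi_{m+n}$ that are independent of $\calF_m=\sigma(\xi_1,\dots,\xi_m)$ and i.i.d.\ copies of $\xi_1$, the set $\calR^{(2)}_n$ is independent of $\calR^{(1)}_m$ and distributed as $\calR_n$. Translation invariance of cardinality gives $|\calR[m,m+n]|=|\calR^{(2)}_n|$, so with the choice
\[
\calE(m,n)\,:=\,|\calR_m\cap\calR[m,m+n]|
\]
the desired equation holds. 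To identify the law of $\calE(m,n)$, I would shift the intersection by $-S_m$, obtaining $\calE(m,n)=|(\calR_m-S_m)\cap\calR^{(2)}_n|$, and apply the time-reversal identity $\hat S_k:=S_m-S_{m-k}$; the reversed walk has increments $\xi_m,\xi_{m-1},\dots,\xi_1$, and by exchangeability of i.i.d.\ sequences it has the same distribution as $\{S_k\}_{k=0}^m$, while its range equals $S_m-\calR_m$. Combining these two steps identifies $\calE(m,n)$ in distribution with the number of common points of two independent copies of $\calR_m$ and $\calR_n$, that is, with $|\calR_m\cap\widetilde\calR_n|$.

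Finally, the closing bound is a pathwise statement on the product space carrying the two independent walks $\{S_n\}$ and $\{\widetilde S_n\}$: from $\calR_m\subseteq\calR_{m+n}$ and $\widetilde\calR_n\subseteq\widetilde\calR_{m+n}$ one gets $\calR_m\cap\widetilde\calR_n\subseteq\calR_{m+n}\cap\widetilde\calR_{m+n}$, so taking cardinalities yields $|\calR_m\cap\widetilde\calR_n|\le I_{m+n}$. The main obstacle I expect is the clean identification of the law of $\calE(m,n)$ with that of $|\calR_m\cap\widetilde\calR_n|$: translating by $-S_m$ produces the \emph{reversed} past range $\calR_m-S_m=-(S_m-\calR_m)$, and so the time-reversal argument has to be executed carefully to absorb the reflection $y\mapsto-y$, either by using that only cardinalities of intersections are compared or by relying on the structural symmetry inherited from the i.i.d.\ increment sequence. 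All subsequent steps are routine once this distributional matching is settled.
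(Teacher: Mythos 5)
You follow essentially the same route as the paper: split the path at time $m$, apply inclusion--exclusion, recenter at $S_m$, and use the Markov property. Your choice to leave the past range unshifted is only cosmetic, since pathwise $\calE(m,n)=\aps{\calR_m\cap\calR[m,m+n]}=\aps{(\calR_m-S_m)\cap(\calR[m,m+n]-S_m)}$, which is exactly the paper's error term; your arguments for the independence and the laws of $\calR^{(1)}_m$ and $\calR^{(2)}_n$, and for the closing bound $\aps{\calR_m\cap\widetilde{\calR}_n}\le I_{m+n}$ via monotonicity on the product space, coincide with the paper's.

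The step you yourself flag as the main obstacle is, however, a genuine gap as written, and your first suggested repair (that only cardinalities of intersections are compared) cannot close it. Time reversal gives, with no symmetry input, that $S_m-\calR_m$ has the law of $\calR_m$, hence $\calR_m-S_m$ has the law of $-\calR_m$, i.e.\ of the range of the walk with step law $-\xi_1$; for an asymmetric walk the intersection of this reversed range with an independent copy of $\calR_n$ need not have the law of $\aps{\calR_m\cap\widetilde{\calR}_n}$. Concretely, for a walk with nonnegative steps with $\bbP(\xi_1=0)>0$ and $\bbP(\xi_1=1)>0$ one has $(\calR_m-S_m)\cap(\calR[m,m+n]-S_m)=\{0\}$ almost surely, so in this construction $\calE(m,n)\equiv 1$, while $\aps{\calR_m\cap\widetilde{\calR}_n}\ge 2$ with positive probability; so the reflection $y\mapsto -y$ cannot be absorbed for free. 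The paper disposes of precisely this point with the symmetry of the step distribution: if $\xi_1$ and $-\xi_1$ have the same law, then the independent pair $(\calR_m-S_m,\ \calR[m,m+n]-S_m)$ has the same joint law as $(\calR_m,\widetilde{\calR}_n)$, and the identification of the law of $\calE(m,n)$ is immediate. To finish your proof you must either invoke this symmetry explicitly, or state the conclusion with $\widetilde{\calR}_n$ taken as the range of the time-reversed walk (step law $-\xi_1$), which is what your computation actually establishes; apart from this splice, your proposal is correct.
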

\begin{proof}
	We clearly have
\begin{align*}
		\aps{\calR_{m + n}}
		& \,=\, \aps{\calR_m \cup \calR[m, m + n]}\, =\, \aps{(\calR_m - S_m) \cup (\calR[m, m + n] - S_m)} = \aps{\calR^{(1)}_m \cup \calR^{(2)}_n} \\
		& \,=\, \aps{\calR^{(1)}_m} + \aps{\calR^{(2)}_n} - \aps{\calR^{(1)}_m \cap \calR^{(2)}_n} \,.
	\end{align*}
	The Markov property implies that 
the random variables $\calR^{(1)}_m$ and $\calR^{(2)}_n$ are independent  and that the law of $\calR^{(2)}_n$ is equal to the law of $\calR_n$. By symmetry, $\calR^{(1)}_m$ has the same law as $\calR_m$.
	Evidently, the random variable $\aps{\calR^{(1)}_{m } \cap \calR^{(2)}_{ n}}$ is equal in law to $|\calR_m \cap \widetilde{\calR}_n|$.
The last  inequality $|\calR_m \cap \widetilde{\calR}_n|\le I_{m+n}$ follows by a monotonicity argument, and the proof is finished.
\end{proof}

%%%%%%%%% PROOF

\begin{proof}[Proof of  Theorem \ref{tm:FCLR_for_the_car}]
To establish the desired result, we again show validity of conditions (i) and (ii) discussed before the proof of Theorem \ref{tm:FCLR_for_the_cap}. We consider the following sequence of random elements in the space $\calD([0,\infty),\R),$
\begin{align*}
X_t^{n}\, =\, \frac{\aps{\calR_{\floor{nt}}} - \bbE\bigl[\aps{\calR_{\floor{nt}}}\bigr]}{\sigma_d \sqrt{n}}\,,\qquad n\ge1\,,
\end{align*}
for a constant $\sigma_d >0$.
By \cite[Theorem 4.5]{LeGall-Rosen}, 
%, since $d > 3\alpha / 2$, 
\begin{equation}\label{eq:CLT-X_t^n}
	X_t^{n} \,=\, \frac{\aps{\calR_{\floor{nt}}} - \bbE\bigl[\aps{\calR_{\floor{nt}}}\bigr]}{\sigma_d \sqrt{\floor{nt}}} \cdot \frac{\sqrt{\floor{nt}}}{\sqrt{nt}} \cdot \frac{\sqrt{nt}}{\sqrt{n}} \,\xrightarrow[n\nearrow\infty]{(\text{d})}\, \calN(0, t)\,.
\end{equation}

\noindent
\textit{Condition (i)}.
We choose arbitrary integer $k \ge1$ and fix $0 = t_0 < t_1 < t_2 < \cdots < t_k $. 
 By Cram\'{e}r-Wold theorem \cite[Corollary 5.5]{Kallenberg},
 it suffices to prove that 
\begin{equation}\label{eq:cvg_CW_card}
	\sum_{j = 1}^k \nu_j X_{t_j}^{n} 
	\,\xrightarrow[n\nearrow\infty]{(\text{d})}\, 
	\sum_{j = 1}^k \nu_j B_{t_j}\,, \qquad  (\nu_1, \nu_2, \ldots, \nu_k) \in \bbR^k\,.
\end{equation}

Proceeding in an analogous way as in the proof of Theorem \ref{tm:FCLR_for_the_cap}, and applying Lemma \ref{lm:cardinality_decomposition}, we obtain 
\begin{equation*}
\aps{\calR_{\floor{nt_j}}}  \,=\,	
	\sum_{i = 1}^j \aps{\calR^{(i)}_{\floor{nt_i} -\floor{n t_{i - 1}}} }
	- \sum_{i = 1}^{j - 1}\calE^{(i)}_{\floor{nt_j}}
	\,,\qquad 
	j=1,\ldots ,k\,,
\end{equation*}
where the random variables $\calR^{(i)}_{\floor{nt_i} -\floor{n t_{i - 1}}}$, $i=1,\dots,k$, are independent, and the random variable $\calR^{(i)}_{\floor{nt_i} -\floor{n t_{i - 1}}}$ has the same law as $\calR_{\floor{nt_i} -\floor{n t_{i - 1}}}$. Moreover, 
\begin{align*}
\calE^{(i)}_{\floor{nt_j}} \,=\, | \calR^{(i)}_{\floor{nt_i} -\floor{n t_{i - 1}}}
\cap 
\calR^{(i+1)}_{\floor{nt_j} -\floor{n t_{i }}}
 |\,,
\end{align*}
and it has the same law as $ |\calR_{\floor{nt_i} -\floor{n t_{i - 1}}}
\cap \widetilde{\calR}_{\floor{nt_j} -\floor{n t_{i }}}|$.
Then, we can adapt all the arguments from the proof of Theorem \ref{tm:FCLR_for_the_cap}, and thus we only show how to establish the desired weak convergence. We have the following equality
\begin{equation}\label{conv}
\sum_{j = 1}^k \nu_j X_{t_j}^{n}
	 \,=\, \sum_{i = 1}^k \Bigl(\sum_{j = i}^k \nu_j\Bigr) J^{(i)}_n
	 - \frac{1}{\sigma_d \sqrt{n}} \sum_{j=1}^k \nu_j 	\sum_{i = 1}^{j - 1}\calE^{(i)}_{\floor{nt_j}}
	  + \frac{1}{\sigma_d \sqrt{n}} \sum_{j=1}^k \nu_j 	\sum_{i = 1}^{j - 1}\bbE\bigl[\calE^{(i)}_{\floor{nt_j}}\bigr]\,,
\end{equation}
where
\begin{align*}
J_n^{(i)}\, =\, \frac{\aps{\calR^{(i)}_{\floor{nt_i} -\floor{n t_{i - 1}}} }- \bbE\bigl[\aps{\calR^{(i)}_{\floor{nt_i} -\floor{n t_{i - 1}}} }\bigr]}{\sigma_d \sqrt{n}}\,.
\end{align*}
Using monotonicity argument together with \eqref{eq:trivial}, one can apply \eqref{eq:CLT-X_t^n} to arrive at
\begin{equation*}
	J_n^{(i)} \,\xrightarrow[n\nearrow\infty]{(\text{d})}\, \calN(0, t_i - t_{i - 1})\,, \qquad i=1, 2, \ldots, k\,.
\end{equation*}

We now investigate the convergence of the two last (error) terms in \eqref{conv}.
Recall that $\{S_n\}_{n\ge0}$ and $\{\widetilde S_n\}_{n\ge0}$ are two independent and identically distributed random walks  (defined on the same probability space),  satisfying  (\textbf{A1}) and (\textbf{A2}). Then, in view of  
\cite[Remark after Corollary 3.2]{LeGall-Rosen}, the expectation of the number of their intersection points up to time $n$ admits the following bound
\begin{equation}\label{bound:LeGall}
	\bbE[I_n] \,\le\, C F_d(n)\,,
\end{equation}
where $C>0$ is a constant and $F_d(n)$ is given by
	\[   
	F_d(n) \,=\, 
	\begin{cases}
		1\,, &\quad d/\alpha  > 2\,,\\
		\sum_{k=1}^n k^{-1}\ell (k)^{-d}\,, & \quad d/\alpha = 2\,,\\
		n^2 (b(n))^{-d}\,, & \quad 1 < d/\alpha < 2\,,\\
	\end{cases}
	\]
where $\,b(x)\,$ and $\,\ell(x)\,$ are as in \eqref{eq:shape_of_function_b}. Combining the Markov  inequality with Lemma \ref{lm:cardinality_decomposition}, we obtain
\begin{equation*}
	\bbP \left( n^{-1/2} \calE^{(i)}_{\floor{nt_j}} > \varepsilon \right) 
	\,\le\, 
	\frac{\bbE\bigl[\calE^{(i)}_{\floor{nt_j}}  \bigr]}{\varepsilon \sqrt{n}} 
\,\le\, 	\frac{\bbE\bigl[I_{\floor{nt_j}}  \bigr]}{\varepsilon \sqrt{n}} 
\,	\le\, \frac{CF_d(\floor{nt_j})}{\varepsilon \sqrt{n}}\,.
\end{equation*}
For $d/\alpha \ge 2$, using \cite[Lemma 2.2]{LeGall-Rosen}, we clearly have
\begin{equation*}
	\frac{F_d(\floor{nt_j})}{\sqrt{n}}\, \xrightarrow[n\nearrow\infty]{}\, 0\,.
\end{equation*}
If $3/2 < d /\alpha< 2$, we proceed as follows
\begin{align*}
	\frac{F_d(\floor{nt_j})}{\sqrt{n}}
	& \,=\, \frac{\floor{nt_j}^{2 - d/\alpha} \bigl(\ell(\floor{nt_j})\bigr)^{-d}}{\sqrt{n}} \\
	&\,\le\, \frac{(nt_j)^{2 - d/\alpha} \bigl(\ell(\floor{nt_j})\bigr)^{-d}}{\sqrt{n}} \\ &\,
	 = \,\frac{t_j^{2 - d/\alpha}\bigl(\ell(\floor{nt_j})\bigr)^{-d}}{n^{d/\alpha - 3/2}}\, \xrightarrow[n\nearrow\infty]{}\, 0\,.
\end{align*}
These relations imply that the first error term in \eqref{conv} converges in probability (and whence in distribution) to zero and the second  term is negligible.
Using the fact that $\calR^{(i)}_{\floor{nt_i} -\floor{n t_{i - 1}}}$, $i=1,\dots, k,$ are independent, we obtain that the quantity in \eqref{conv} converges in law to a normal random variable with mean zero and variance $ \sum_{i = 1}^k \bigl(\sum_{j = i}^k \nu_j\bigr)^2 (t_i - t_{i - 1})$.
We finally conclude that 
the finite-dimensional distributions of $\{X^{n}\}_{n\ge1}$ converge weakly to the finite-dimensional distributions of a one-dimensional standard Brownian motion, which means that  condition (i) is satisfied.
\smallskip

\noindent
\textit{Condition (ii)}. 
Let $\{T_n\}_{n \ge1}$ be a bounded sequence of $\{X^{n}\}_{n\ge1}$-stopping times and $\{h_n\}_{n \ge1}\subset[0,\infty)$ a sequence  converging to zero. We prove that
\begin{equation*}
	X^{n}_{T_n + h_n} - X^{n}_{T_n} \,\xrightarrow[n\nearrow\infty]{\bbP}\, 0\,.
\end{equation*}
By the definition, 
\begin{equation*}
	X^{n}_{T_n + h_n} - X^{n}_{T_n} \,=\, \frac{\aps{\calR_{\floor{nT_n + nh_n}}} - \bbE\bigl[\aps{\calR_{\floor{nT_n + nh_n}}}\bigr]}{\sigma_d \sqrt{n}} - \frac{\aps{\calR_{\floor{nT_n}}} - \bbE\bigl[\aps{\calR_{\floor{nT_n}}}\bigr]}{\sigma_d \sqrt{n}}.
\end{equation*}
Combining Lemma \ref{lm:cardinality_decomposition} with  
the following trivial inequality
\begin{equation}\label{eq:trivial2}
	\aps{\calR_{n + 1}} \,=\, \aps{\calR_n \cup \{S_{n + 1}\}} \,\le\, \aps{\calR_n} + 1\,,
\end{equation}
 and with the strong Markov property, we obtain
\begin{align*}
\aps{\calR_{\floor{nT_n + nh_n}}} 
\,\le\, 
\aps{\calR_{\floor{nT_n} + \floor{nh_n}+1}} 
\,\le\, 
\aps{\calR^{(1)}_{\floor{nT_n}}} + \aps{\calR^{(2)}_{\floor{nh_n} + 1}}\,,
\end{align*}
and
\begin{equation*}
\aps{\calR_{\floor{nT_n + nh_n}}} 
\,\geq \,
\aps{\calR_{\floor{nT_n} + \floor{nh_n}}} 
\,\geq \,
	\aps{\calR^{(1)}_{\floor{nT_n}}} + \aps{\calR^{(2)}_{\floor{nh_n}}} 
	- \calE\bigl(\floor{n T_n }, \floor{n h_n}\bigr)\, ,
\end{equation*}
where $\calR^{(1)}_{\floor{nT_n}}$ and $\calR^{(2)}_{\floor{nh_n}}$  are independent and  have the same law as $\calR_{\floor{nT_n}}$ and $\calR_{\floor{nh_n}}$, respectively. Moreover, the random variable $\calE\bigl(\floor{nT_n}, \floor{nh_n)}\bigr)  $ has the same law as $\aps{\calR_{\floor{nT_n}} \cap \widetilde{\calR}_{\floor{nh_n}}}$.

We now show how to bound the sequence $\{X^{n}_{T_n + h_n} - X^{n}_{T_n}\}_{n\ge1}$ from below and above with quantities that converge to zero in probability.
We only sketch the argument for the lower bound, as the second case is similar. We have
\begin{align*}
	X^{n}_{T_n + h_n} - X^{n}_{T_n}
	& \,\ge\, 
	 \frac{\aps{\calR^{(2)}_{\floor{nh_h}}} - \bbE\bigl[\aps{\calR_{\floor{nh_h} + 1}}\bigr]}{\sigma_d \sqrt{n}} 
	- 
	\frac{\calE\bigl(\floor{n T_n }, \floor{n h_n}\bigr) }{\sigma_d \sqrt{n}} \\
	&\, =\, \frac{\aps{\calR^{(2)}_{\floor{nh_h}}} - \bbE\bigl[\aps{\calR_{\floor{nh_h}}}\bigr]}{\sigma_d \sqrt{n}} 
	- \frac{\calE\bigl(\floor{n T_n }, \floor{n h_n}\bigr) }{\sigma_d \sqrt{n}} 
	+ \frac{\bbE\bigl[\aps{\calR_{\floor{nh_h}}}\bigr] - \bbE\bigl[\aps{\calR_{\floor{nh_h} + 1}}\bigr]}{\sigma_d \sqrt{n}} \\
	& \,\ge\, \frac{\aps{\calR^{(2)}_{\floor{nh_h}}} - \bbE\bigl[\aps{\calR_{\floor{nh_h}}}\bigr]}{\sigma_d \sqrt{n}} -
	 \frac{\calE\bigl(\floor{n T_n }, \floor{n h_n}\bigr) }{\sigma_d \sqrt{n}} - 
	 \frac{1}{\sigma_d \sqrt{n}}\,,
\end{align*}
where in the last line we used \eqref{eq:trivial2}.
It remains to prove that 
\begin{equation}\label{eq:LB_goes_to_0_in_P_card}
	\frac{\aps{\calR^{(2)}_{\floor{nh_h}}} - \bbE\bigl[\aps{\calR_{\floor{nh_h}}}\bigr]}{\sigma_d \sqrt{n}} - 
	\frac{\calE\bigl(\floor{n T_n }, \floor{n h_n}\bigr) }{\sigma_d \sqrt{n}}\, \xrightarrow[n \nearrow \infty]{\bbP}\, 0\,.
\end{equation}
We take an  arbitrary $\varepsilon > 0$ and apply  Markov's inequality to arrive at
\begin{align*}
	&\bbP
	 \left( n^{-1/2} \bigl|\bigl( \aps{\calR^{(2)}_{\floor{nh_h}}} - \bbE\bigl[\aps{\calR_{\floor{nh_h}}}\bigr]\bigr) - 
	\calE\bigl(\floor{n T_n }, \floor{n h_n}\bigr)\bigr| > \sigma_d \varepsilon \right) \\
	& \,\le\, \bbP\left( 2 \bigl|\aps{\calR^{(2)}_{\floor{nh_n}}} - \bbE\bigl[\aps{\calR_{\floor{nh_n}}}\bigr] \bigr| > \varepsilon \sigma_d \sqrt{n}\right) 
	+ \bbP \left( 2 \calE\bigl(\floor{n T_n }, \floor{n h_n}\bigr)> \varepsilon \sigma_d \sqrt{n}\right) \\
	& \,\le \,
	\frac{4 \Var(\aps{\calR_{\floor{nh_n}}})}{\varepsilon^2 \sigma_d^2 n} 
	+ \frac{2 \bbE\bigl[ \calE\bigl(\floor{n T_n }, \floor{n h_n}\bigr)\bigr]}{\varepsilon \sigma_d \sqrt{n}}\,.
\end{align*}
By \cite[Theorem 4.4]{LeGall-Rosen}, we know that there exists a constant $C_1 > 0$ such that $\Var(\aps{\calR_n}) \le C_1 n$, for all $n \ge1$. By our assumptions, there is a constant $C$ such that $\sup_{n\ge1}\max\{T_n, h_n\} \leq C$. Moreover, by Lemma \ref{lm:cardinality_decomposition}, $\bbE[ \calE(\floor{n T_n }, \floor{n h_n})] \leq \bbE[I_{\floor{Cn}}]$.
 Hence, by \eqref{bound:LeGall}, we obtain
\begin{align*}
	&\bbP
	 \left( n^{-1/2} \bigl|\bigl( \aps{\calR^{(2)}_{\floor{nh_h}}} - \bbE\bigl[\aps{\calR_{\floor{nh_h}}}\bigr]\bigr) - 
	\calE\bigl(\floor{n T_n }, \floor{n h_n}\bigr)\bigr| > \sigma_d \varepsilon \right) \\
	& \,\le\, \frac{4C_1 n h_n}{\varepsilon^2 \sigma_d^2 n} + \frac{2C_2 F_d(\floor{Cn})}{\varepsilon \sigma_d \sqrt{n}}  \,.
\end{align*}
Since $h_n \to 0$ and $F_d(\floor{Cn}) / \sqrt{n} \to 0$ as $n \nearrow \infty$, we conclude \eqref{eq:LB_goes_to_0_in_P_card},
and  the proof is finished.  
\end{proof}

\bibliographystyle{abbrv}
\bibliography{FCLT}

\begin{thebibliography}{10}

\bibitem{Asselah_Zd}
A.~Asselah, B.~Schapira, and P.~Sousi.
\newblock Capacity of the range of random walk on {$\mathbb{Z}^d$}.
\newblock {\em Trans. Am. Math. Soc.}, 370(11):7627--7645, 2018.

\bibitem{Asselah_Z4}
A.~Asselah, B.~Schapira, and P.~Sousi.
\newblock Capacity of the range of random walk on {$\Bbb{Z}^4$}.
\newblock {\em Ann. Probab.}, 47(3):1447--1497, 2019.

\bibitem{Bass_Kumagai}
R.~F. Bass and T.~Kumagai.
\newblock Laws of the iterated logarithm for the range of random walks in two
  and three dimensions.
\newblock {\em Ann. Probab.}, 30(3):1369--1396, 2002.

\bibitem{Bass_Rosen}
R.~F. Bass and J.~Rosen.
\newblock An almost sure invariance principle for the range of planar random
  walks.
\newblock {\em Ann. Probab.}, 33(5):1856--1885, 2005.

\bibitem{BGT_book}
N.~H. Bingham, C.~M. Goldie, and J.~L. Teugels.
\newblock {\em Regular {V}ariation}.
\newblock Cambridge University Press, Cambridge, 1989.

\bibitem{Chang}
Y.~Chang.
\newblock Two observations on the capacity of the range of simple random walks
  on $\mathbb{Z}^3$ and $\mathbb{Z}^4$.
\newblock {\em Electron. Commun. Probab.}, 22(25):9 pp., 2017.

\bibitem{CSS19}
W.~Cygan, N.~Sandri\'{c}, and S.~\v{S}ebek.
\newblock {C}{L}{T} for the capacity of the range of stable random walks.
\newblock Preprint (2019),\ arXiv:1904.05695.

\bibitem{Dvoretzky}
A.~Dvoretzky and P.~Erd\"{o}s.
\newblock Some problems on random walk in space.
\newblock In {\em Proceedings of the {S}econd {B}erkeley {S}ymposium on
  {M}athematical {S}tatistics and {P}robability, 1950}, pages 353--367.
  University of California Press, Berkeley and Los Angeles, 1951.

\bibitem{Hamana}
Y.~Hamana.
\newblock An almost sure invariance principle for the range of random walks.
\newblock {\em Stochastic Process. Appl.}, 78(2):131--143, 1998.

\bibitem{Jain-Orey}
N.~C. Jain and S.~Orey.
\newblock On the range of random walk.
\newblock {\em Israel J. Math.}, 6:373--380 (1969), 1968.

\bibitem{Jain_Pruitt_1970}
N.~C. Jain and W.~E. Pruitt.
\newblock The central limit theorem for the range of transient random walk.
\newblock {\em Bull. Amer. Math. Soc.}, 76:758--759, 1970.

\bibitem{Jain_Pruitt_1970_2}
N.~C. Jain and W.~E. Pruitt.
\newblock The range of recurrent random walk in the plane.
\newblock {\em Z. Wahrscheinlichkeitstheorie und Verw. Gebiete}, 16:279--292,
  1970.

\bibitem{Jain_Pruitt}
N.~C. Jain and W.~E. Pruitt.
\newblock The range of transient random walk.
\newblock {\em J. Analyse Math.}, 24:369--393, 1971.

\bibitem{Jain_Pruitt_LIL}
N.~C. Jain and W.~E. Pruitt.
\newblock The law of the iterated logarithm for the range of random walk.
\newblock {\em Ann. Math. Statist.}, 43:1692--1697, 1972.

\bibitem{Jain_Pruitt_Berkeley}
N.~C. Jain and W.~E. Pruitt.
\newblock The range of random walk.
\newblock In {\em Proceedings of the {S}ixth {B}erkeley {S}ymposium on
  {M}athematical {S}tatistics and {P}robability ({U}niv. {C}alifornia,
  {B}erkeley, {C}alif., 1970/1971), {V}ol. {III}: {P}robability theory}, pages
  31--50, 1972.

\bibitem{Jain_Pruitt_further}
N.~C. Jain and W.~E. Pruitt.
\newblock Further limit theorems for the range of random walk.
\newblock {\em J. Analyse Math.}, 27:94--117, 1974.

\bibitem{Kallenberg}
O.~Kallenberg.
\newblock {\em Foundations of modern probability}.
\newblock Probability and its Applications (New York). Springer-Verlag, New
  York, second edition, 2002.

\bibitem{LeGall-French}
J.-F. Le~Gall.
\newblock Propri\'{e}t\'{e}s d'intersection des marches al\'{e}atoires. {I}.
  {C}onvergence vers le temps local d'intersection.
\newblock {\em Comm. Math. Phys.}, 104(3):471--507, 1986.

\bibitem{LeGall-Rosen}
J.-F. Le~Gall and J.~Rosen.
\newblock The range of stable random walks.
\newblock {\em Ann. Probab.}, 19(2):650--705, 1991.

\bibitem{Port}
S.~C. Port.
\newblock Limit theorems for transient {M}arkov chains.
\newblock {\em J. Combinatorial Theory}, 2:107--128, 1967.

\bibitem{Sato}
K.~Sato and T.~Watanabe.
\newblock Moments of last exit times for {L}\'{e}vy processes.
\newblock {\em Ann. Inst. H. Poincar\'{e} Probab. Statist.}, 40(2):207--225,
  2004.

\bibitem{Schapira}
B.~Schapira.
\newblock Capacity of the range in dimension 5.
\newblock Preprint (2019), arXiv:1904.11183.

\bibitem{Spitzer}
F.~Spitzer.
\newblock {\em Principles of random walk}.
\newblock Springer-Verlag, New York-Heidelberg, second edition, 1976.

\bibitem{Takeuchi}
J.~Takeuchi.
\newblock Moments of the last exit times.
\newblock {\em Proc. Japan Acad.}, 43:355--360, 1967.

\end{thebibliography}

\end{document}